\newcommand{\N}{\mathbb{N}}
\newcommand{\Z}{\mathbb{Z}}
\newcommand{\C}{\mathbb{C}}
\newcommand{\Q}{\mathbb{Q}}
\newcommand{\Ff}{\mathcal{F}}
\newcommand{\Ee}{\mathcal{E}}
\newcommand{\Oo}{\mathcal{O}}
\newcommand{\Image}{\operatorname{Im}}
\newcommand{\Id}{\operatorname{Id}}
\newcommand{\ad}{\operatorname{ad}}
\newcommand{\Spec}{\operatorname{Spec}}
\newcommand{\Gal}{\operatorname{Gal}}
\newcommand{\GL}{\operatorname{GL}}
\renewcommand{\tilde}{\widetilde}
\newenvironment{subproof}[1][\proofname]{%
  \begin{proof}[#1]%
}{%
  \end{proof}%
}
\title{$p$-integrability}
\author{Yujie Xu}
\address{Department of Mathematics, Columbia University}
\curraddr{}
\email{yujiexu@alumni.harvard.edu}
\date{September 12th, 2021}
\newtheorem{theorem}[equation]{Theorem}
\newtheorem{prop}[equation]{Proposition}
\newtheorem{lem}[equation]{Lemma}
\newtheorem{Coro}[equation]{Corollary}
\theoremstyle{definition}
\newtheorem{Defn}[equation]{Definition}
\newtheorem{remark}[equation]{Remark}
\newtheorem{numberedparagraph}[equation]{}
\newtheorem{conj}[equation]{Conjecture}
\begin{document}

\begin{abstract}
    In this short note, we prove the equivalence of Grothendieck-Katz $p$-curvature Conjecture with Conjecture F in \cite{Ekedahl-Shepherd-Barron-Taylor}. More precisely, we show that Conjecture F implies the $p$-curvature conjecture, and that the $p$-curvature Conjecture implies Conjecture F for the foliation attached to a vector bundle with integrable connection. 
\end{abstract}

\maketitle

\section{Background}
The existence of compact leaves of algebraic foliations has been instrumental in constructing algebraic subvarieties. In general, if a manifold is compact complex algebraic, and if integrable distributions on this manifold is given by an analytic sub-bundle, then any compact leaf would be a closed algebraic subvariety. 

Let $X$ be a normal complex quasi-projective variety with a given possibly singular integrable distribution on it, i.e. a coherent subsheaf
(in the Zariski topology) $\mathcal{F}$ of $T_X$ that is closed under the Lie algebra bracket. 
We say that the distribution $\mathcal{F}$ is \textit{algebraically integrable} if there exists a dense open subvariety $U\subset X$, with a smooth morphism $f:U\to V$ such that $\ker df=\mathcal{F}|_U$. 

Let $R$ be some finitely generated $\Z$-subalgebra of $\C$ over which $X$ and $\mathcal{F}$ have models. By \textit{reduction modulo almost all primes $p$}, we mean reduction modulo all maximal ideals of $R$ outside some proper closed subscheme of $\Spec R$. 

Let $Y$ be a normal variety over a field of characteristic $p$ and $\mathcal{G}$ be a coherent Lie subalgebra of $T_Y$, which we view as the sheaf of derivations of $\Oo_Y$. Since the base field has characteristic $p$, the $p$-th power of a derivation is a derivation. We say that $\mathcal{G}$ is \textit{$p$-integrable} if it is closed under taking $p$-th powers. 
\begin{remark}
If $\mathcal{G}$ is algebraically integrable, it is in particular $p$-integrable. The converse, however, is not true. 
\end{remark}
Likewise, we say that an integrable connection $\nabla$ is \textit{$p$-integrable} if 
\[\nabla(D^p)-\nabla(D)^p=0.\] 

\begin{conj}\label{Conj-F}(Conjecture F \cite{Ekedahl-Shepherd-Barron-Taylor}) $\mathcal{F}$ is algebraically integrable $\Longleftrightarrow$ the reduction of $\mathcal{F}$ is $p$-integrable modulo almost all primes $p$.  
\end{conj}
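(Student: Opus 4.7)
The plan is to address the two implications of Conjecture~\ref{Conj-F} separately. For the direction ``$\Rightarrow$'', suppose $\mathcal{F}$ is algebraically integrable via a smooth morphism $f\colon U\to V$ on a dense open $U\subset X$. Then $f$ spreads out over an open subscheme of $\Spec R$, and reduction modulo any prime $p$ in this open subscheme preserves smoothness. On the reduction, $\ker(df_p)$ is automatically closed under $p$-th powers of derivations, since $f_*D=0$ forces $f_*(D^p)=(f_*D)^p=0$ in characteristic $p$. From $p$-integrability on $U_p\subset X_p$ one recovers it on all of $X_p$ by saturating, using normality; this is essentially the remark preceding the conjecture.

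The direction ``$\Leftarrow$'' is where the content lies. I would first isolate the case where $\mathcal{F}$ is the horizontal foliation attached to an integrable connection $(V,\nabla)$ on a vector bundle: the graph of $\nabla$ cuts out a distribution in the tangent bundle of the total space of $V$, and its $p$-integrability mod $p$ is precisely the vanishing of the $p$-curvature, i.e.\ $\nabla(D^p)-\nabla(D)^p=0$. In this linear situation, Conjecture~\ref{Conj-F} matches the Grothendieck-Katz $p$-curvature conjecture, since vanishing $p$-curvature modulo almost all primes should force $\nabla$ to trivialize after a finite \'etale cover, producing a smooth morphism whose fibres are the leaves. This handles the case relevant to the paper's main theorem.

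For a general coherent $\mathcal{F}\subset T_X$ I would try to bootstrap from the linear case. One natural route is Cartier descent: in characteristic $p$, $p$-integrable sub-bundles correspond to Frobenius descent data, so one could try to package this data coherently across $p$ and lift to a bundle with integrable connection in characteristic zero, thereby reducing to the previous case. An alternative route is to appeal to Bost-style arithmetic algebraicity theorems or Miyaoka's generic semipositivity of cotangent sheaves along putative leaves, both of which produce algebraicity of leaves from arithmetic or positivity data.

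The main obstacle is precisely this last step: translating infinitely many local $p$-integrability data into a single global algebraic foliation over $\C$. Even granting Grothendieck-Katz, gluing Cartier-descent data coherently across primes and lifting beyond the linear case appears subtle, and I expect this is why an unconditional reverse implication in the paper is formulated only for foliations attached to vector bundles with integrable connection, while the general case is shown to be equivalent to the $p$-curvature conjecture rather than proved outright.
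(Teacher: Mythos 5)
The statement you were asked to prove is an open conjecture, and the paper does not prove it: the paper's only results about Conjecture~F are (i) the remark that algebraic integrability implies $p$-integrability, and (ii) the equivalence, for the foliation attached to a vector bundle with integrable connection, between Conjecture~F and the Grothendieck--Katz $p$-curvature conjecture (itself open). Your proposal correctly diagnoses this situation rather than resolving it. Your treatment of the forward direction is sound and matches the paper's remark --- spreading out the smooth morphism $f\colon U\to V$, noting that $\ker df$ is stable under $p$-th powers in characteristic $p$ because $df(D)=0$ forces $df(D^p)=0$ --- and your identification of the linear case with the $p$-curvature conjecture is exactly the content of the paper's Proposition~\ref{algebraic-integrable-finite-monodromy} and Theorem~\ref{p-integrability-equiv-connections-vs-foliation}.

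The genuine gap is the one you yourself name: the reverse implication. Your linear case rests on the phrase ``vanishing $p$-curvature modulo almost all primes should force $\nabla$ to trivialize after a finite \'etale cover,'' which is precisely the unproven content of Conjecture~\ref{p-curvature-conj}; and your proposed bootstrap to general coherent $\mathcal{F}$ via Cartier descent or Bost/Miyaoka-type results is a heuristic, not an argument --- gluing Frobenius-descent data across infinitely many primes into a characteristic-zero object is exactly the open problem. So the proposal cannot be accepted as a proof, but it should not be faulted for failing to prove an open conjecture; its value is that it correctly separates the provable direction from the conjectural one and locates the reduction to Grothendieck--Katz in the same place the paper does.
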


\begin{conj}\label{p-curvature-conj}(Grothendieck-Katz $p$-curvature Conjecture \cite{Katz-pcurvature-Invent}) An integrable connection has a finite monodromy group $\Longleftrightarrow$ it is $p$-integrable modulo almost all primes $p$. 
\end{conj}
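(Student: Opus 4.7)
The plan is to establish the two implications of Conjecture~\ref{p-curvature-conj} separately, reducing the hard one to Conjecture~\ref{Conj-F} by means of a graph foliation construction.

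The forward direction, finite monodromy $\Longrightarrow$ $p$-integrable modulo almost all primes, is the easier one. If $(\mathcal{E},\nabla)$ has finite monodromy group $G$, there is a finite \'etale cover $\pi\colon X'\to X$ of degree $|G|$ on which $\pi^*(\mathcal{E},\nabla)\cong (\Oo_{X'}^r,d)$. I would spread $X,X',\mathcal{E},\nabla,\pi$ out over a finitely generated $\Z$-algebra $R\subset\C$, chosen so that the cover remains \'etale and the trivialization remains valid modulo every maximal ideal of $R$ outside a proper closed subscheme. In characteristic $p$, the trivial connection $d$ tautologically satisfies $d(D^p)=d(D)^p$, and $p$-integrability descends along the finite \'etale cover $\pi$, giving the conclusion for almost all $p$.

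The reverse direction carries the deep content, and I would deduce it from Conjecture~\ref{Conj-F}. To $(\mathcal{E},\nabla)$ on $X$, I associate the graph foliation $\mathcal{F}_\nabla$ on $\Pp(\mathcal{E})$ whose distribution at a point is the horizontal lift of $T_X$ via $\nabla$. Integrability of $\nabla$ ensures $\mathcal{F}_\nabla$ is closed under the Lie bracket, making it a distribution in the sense of Section~1. The key identity is that the $p$-curvature $D\mapsto\nabla(D^p)-\nabla(D)^p$ precisely measures the failure of the horizontal lift to commute with $p$-th powers of derivations, so $\nabla$ is $p$-integrable modulo $p$ if and only if $\mathcal{F}_\nabla$ is $p$-integrable modulo $p$. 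Applying Conjecture~\ref{Conj-F} to $\mathcal{F}_\nabla$ yields a dense open $U\subset\Pp(\mathcal{E})$ and a smooth morphism $f\colon U\to V$ realizing the algebraic leaves. Each leaf maps to $X$ generically finitely, and Stein-factorizing this projection produces a finite \'etale cover $X'\to X$ on which $(\mathcal{E},\nabla)$ becomes trivial; hence the monodromy representation factors through the finite deck group and is finite.

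The principal obstacle is that Conjecture~\ref{Conj-F} is itself open, so this argument yields a conditional proof only. Within the reduction, the subtle technical point is verifying that the graph-foliation dictionary is tight in both directions: showing that $p$-integrability of $\mathcal{F}_\nabla$ on $\Pp(\mathcal{E})$ implies $p$-integrability of $\nabla$ on $\mathcal{E}$, not merely the converse, requires a careful local computation with $p$-th powers of derivations on a projective bundle, and one must also control the behavior over the locus where $\mathcal{F}_\nabla$ is singular so that the algebraic leaves really cover $X$. These points should be dealt with by shrinking to a dense open in $X$ over which $\mathcal{E}$ is locally free and $\mathcal{F}_\nabla$ is smooth, then extending the conclusion using normality of $X$.
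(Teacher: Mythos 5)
The statement you were asked to prove is a conjecture, and it is still open; the paper does not prove it, and your argument does not either. By your own admission the hard implication is deduced from Conjecture \ref{Conj-F}, which is itself unproven, so what you have is at best the implication ``Conjecture \ref{Conj-F} $\Rightarrow$ Conjecture \ref{p-curvature-conj}''. That implication is precisely one half of what the paper actually proves (Corollary \ref{equiv-conj-Coro}, via Proposition \ref{algebraic-integrable-finite-monodromy} and Theorem \ref{p-integrability-equiv-connections-vs-foliation}); you should present your argument as a proof of that reduction, not of the conjecture itself.

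As a reduction, your route is close in spirit to the paper's but differs in two places that matter. First, the paper builds the foliation on the total space $E$ of the bundle, as the image $\Ff_{\varphi}=\varphi(\pi^*TB)$ of the splitting $\varphi$ of $0\to V\to TE\to \pi^*TB\to 0$, and proves the explicit dictionary $\pi^*(\nabla_Ds)=[\varphi\pi^*D,\pi^*s]$ together with $(\nabla_D)^ps=[(\varphi\pi^*D)^p,\pi^*s]$ (using that derivations form a restricted Lie algebra); the equivalence of the two notions of $p$-integrability then follows in both directions from the identity $(\varphi\pi^*D)^p=\varphi\pi^*(D^p)$. You place the foliation on $\Pp(\Ee)$ and assert the same dictionary without the computation; this is exactly the ``subtle technical point'' you flag, and on the projective bundle the horizontal distribution is no longer literally the image of a splitting of the tangent sequence of the total space of $\Ee$, so the verification is not the same and is the actual content of the step. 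Second, to extract finite monodromy from algebraic leaves the paper does not Stein-factorize: it observes that algebraic leaves force every monodromy orbit to be finite and then invokes Lemma \ref{finite-orbit-lemma}, that a subgroup of $\GL_n(\C)$ all of whose orbits are finite is finite. Your Stein-factorization step needs more care as stated: a single algebraic leaf generically finite over $X$ produces a finite cover on which one flat section becomes single-valued, not immediately a cover trivializing all of $(\Ee,\nabla)$; the orbit-counting argument sidesteps this. Your forward direction (finite monodromy $\Rightarrow$ $p$-integrability modulo almost all $p$) is the classical unconditional half and is fine, and is essentially the ``$\Leftarrow$'' portion of Proposition \ref{algebraic-integrable-finite-monodromy} combined with Theorem \ref{p-integrability-equiv-connections-vs-foliation} in the paper.
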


\section{Proof of the equivalence of conjectures}
\begin{numberedparagraph}
First note that algebraic integrability of a foliation implies the algebraicity of leaves. In order to prove the equivalence of Conjecture F for the foliation $\Ff_H$ associated to a vector bundle $\Ee$ with integrable connection $H$, with the Grothendieck-Katz $p$-curvature Conjecture, first we prove the following statement:
\begin{prop}\label{algebraic-integrable-finite-monodromy}$\Ff_H$ is algebraically integrable $\Leftrightarrow (\Ee, H)$ has finite monodromy. 
\begin{proof}
``$\Leftarrow$'': Suppose we have a vector bundle $\Ee\to X$ with flat connection $\nabla$, which has finite monodromy, i.e. $\Image(\pi_1(X)\xrightarrow{\varphi}\GL_n)$ is finite, i.e. if we denote $K=\Image(\varphi)$ which is a finite group, we can consider the surjective map $f: \pi_1(X)\twoheadrightarrow K=\Image(\varphi)$. By the classification of covering spaces, there exists a finite \'etale Galois cover $\tilde{X}\to X$ corresponding to the group $K$ where we have $K=\pi_1(X)/\pi_1(\tilde{X})$, i.e. $\pi_1(\tilde{X})=\ker f$. 
Since finite maps are projective and $X$ is projective, $\tilde{X}$ is projective as well. We can pullback the vector bundle $\Ee$ along $\tilde{X}\to X$ and obtain the pullback bundle $\tilde{\Ee}$ over $\tilde{X}$.

By our construction, 
$\tilde{X}$ has trivial monodromy (since it's the kernel of $f$, it gets mapped into zero in the monodromy map). Recall that by the Riemann-Hilbert correspondence, $(\tilde{\Ee},\tilde{\nabla})$ gives a local system on $\tilde{X}$, which is projective with trivial monodromy, thus by Riemann-Hilbert again, $(\tilde{\Ee},\tilde{\nabla})$ must be trivial. Thus we must have $\Ee\cong X\times V\xrightarrow{f}V$, and thus we can use Galois descent
, by the $G$-equivariance of $\tilde{\Ee}$ and obtain the following diagram 
\[\begin{tikzcd}
\tilde{\Ee} \arrow{r}{} \arrow{d}{} 
&V \arrow{d}{}\\
\Ee \arrow{r}{} & V/G
\end{tikzcd}\]
And we get a ``leaf space'' $V/G$ which parametrizes the leaf space for $\Ee$. Thus by definition $\Ee$ is algebraically integrable.

``$\Rightarrow$'': Now suppose $\Ff_H$ is algebraically integrable, this immediately implies that the leaves (which are fibres of a smooth map) are algebraic. To show that the monodromy is finite, by Lemma \ref{finite-orbit-lemma} below, it suffices to show that every orbit is finite. 
Since the leaves are algebraic, they're finite over the base (i.e. the map from the leaves to the base is finite), i.e. the leaves are finite fibres of the projection onto the base, and since the fibres contain the orbits of monodromy, we have that all orbits of the monodromy are finite. Thus by the following Lemma \ref{finite-orbit-lemma}, the monodromy is finite. 
\begin{lem}\label{finite-orbit-lemma}
If every orbit of $G\subset \GL_n(\C)$ is finite, then $G$ is finite. 
\end{lem} 
\begin{subproof}
We prove by contradiction. Suppose $G$ is not finite, then we can choose a countably infinite sequence $g_1, g_2, \cdots\in G\leq \GL_n(\C)$. First we claim that there is a vector $v$ s.t. $g_1v\neq g_2v\neq \cdots$ where all $g_iv$'s are pairwise distinct. To see this, we consider the sets 
$S_{i,j}:=\{v\in \C^n: g_iv=g_jv\}$, which are Zariski closed proper subvarieties of $\C^n$ (with the usual Zariski topology). Note that properness follows from the fact that, if $S_{i,j}=\C^n$, then we would have $g_i=g_j$ because linear operators on vector spaces are determined by their image. Now, we only need to show that the union of countably many proper closed subvariety is strictly included into $\C^n$, i.e. 
$\bigcup\limits_{i,j\in\N}S_{i,j}\subsetneq \C^n$, then we can take such a $0\neq v\in \C^n\setminus \bigcup\limits_{i,j\in\N}S_{i,j}$ to be our desired vector $v$.

One way to see the strict inclusion is to note that each $S_{i,j}$ has measure $0$ (here we consider the Lebesgue measure on $\C^n$), thus we have $m(\bigcup\limits_{i,j\in\N}S_{i,j})=0$ as the countable sum of zeros, whereas $m(\C^n)=+\infty\neq 0$, thus we must have a strict inclusion.\footnote{Alternatively, we can choose an affine embedding $\C\hookrightarrow \C^n$ s.t. $\C\cap S_{i,j}$ is a finite set of points, and thus $\C\cap(\bigcup\limits_{i,j\in\N}S_{i,j})$ is a countable set of points, however $\C\cap \C^n$ is an uncountable set of points, and thus we must have our desired strict inclusion.}
Now we can simply take $v\in \C^n\setminus \bigcup\limits_{i,j\in\N}S_{i,j}$, and then the orbit of $v$ is not finite, we get a contradiction. Thus $G$ must be finite. 
\end{subproof}
Thus we have proved our proposition for $\C$, it suffices to note that algebraic integrability is preserved under base change. Algebraic integrability over $\Q$ gives algebraic integrability over $\C$.

To go from algebraic integrability over $\C$ to algebraic integrability over $\Q$: first, given $U_{\C}$, if $U_{\C}$ is the set as in the definition of algebraic integrability, then for $\sigma\in\Gal(\C/\Q)$, $\sigma U_{\C}$ is also algebraically integrable, and we can define the corresponding $U_{\Q}$ as $U_{\Q}:=\bigcup\limits_{\sigma}\sigma U_{\C}$ which is also algebraically integrable. Note that by definition, the leaf space $V$ is unique.\footnote{Intuitively, $V$ is the moduli space of leaves, or we can view it as the categorical quotient of the sequence 
$U\times_V U\rightrightarrows U\to V$ 
where $U\times_V U=\{(x,y)|x,y\mbox{ are in the same leaf}\}$ and $V:=U\times _V U/U$ is the categorical quotient.} Thus by uniqueness we can obtain a unique $V_{\Q}$ and thus obtain algebraic integrability over $\Q$.  
\end{proof}
\end{prop}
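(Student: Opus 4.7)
The plan is to prove both implications by invoking the Riemann--Hilbert correspondence together with standard tools from covering space theory and Galois descent, first over $\C$ and then descending to $\Q$.

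For the direction ``finite monodromy $\Rightarrow$ algebraic integrability'', I would argue as follows. Let $\varphi : \pi_1(X) \to \GL_n(\C)$ be the monodromy representation of $(\Ee, H)$ and set $K = \Image(\varphi)$, finite by hypothesis. The classification of covering spaces then yields a finite \'etale Galois cover $\tilde{X} \to X$ with Galois group $K$ for which the pullback representation is trivial. By Riemann--Hilbert the pullback $(\tilde{\Ee}, \tilde{\nabla})$ is canonically trivializable, so $\tilde{\Ee} \cong \tilde{X} \times V$ for some vector space $V$, and the projection to $V$ provides the ``leaf map'' on the cover. I would then use Galois descent along $\tilde{X} \to X$ (which is finite, hence projective, preserving projectivity) to obtain a quotient $V/K$ serving as a leaf space for $\Ff_H$ itself, exhibiting algebraic integrability.

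For the reverse direction, I would start from the observation that algebraic integrability supplies a smooth morphism $f : U \to V$ on a dense open subvariety whose fibers are precisely the leaves of $\Ff_H$. Since $X$ is projective, these leaves are algebraic and the natural map from each leaf to the base is finite. Because every monodromy orbit is contained in a single leaf, every monodromy orbit of the representation $\varphi$ is finite. The problem then reduces to a group-theoretic lemma: any subgroup $G \subset \GL_n(\C)$ with every orbit on $\C^n$ finite must itself be finite. I would prove this by contradiction: assume a countably infinite sequence $g_1, g_2, \ldots \in G$ of pairwise distinct elements, and observe that the loci $S_{i,j} = \{v \in \C^n : g_i v = g_j v\}$ are proper Zariski closed subvarieties of $\C^n$. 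A countable union of such subvarieties cannot exhaust $\C^n$ (for instance by a Lebesgue measure argument, or by restricting to a generic affine line and using cardinality), so a vector $v$ outside this union has infinite $G$-orbit, a contradiction.

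The main obstacle I expect is the descent from $\C$ down to the $\Q$-structure. Over $\C$ the Riemann--Hilbert argument applies directly, and algebraic integrability trivially base-changes from $\Q$ to $\C$. Conversely, given a witness $U_\C \subset X_\C$ to algebraic integrability over $\C$, applying any $\sigma \in \Gal(\C/\Q)$ gives another witness $\sigma U_\C$, and the Galois-stable union $U_\Q := \bigcup_\sigma \sigma U_\C$ descends to a $\Q$-form. The leaf space $V$ is unique (e.g. realized as the categorical quotient of $U \times_V U \rightrightarrows U$), so this uniqueness forces the $\Q$-structure on $V$ as well, completing the descent and hence the proof of the proposition.
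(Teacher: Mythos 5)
Your proposal follows essentially the same route as the paper's own proof: the same finite étale Galois cover and Riemann--Hilbert trivialization for one direction, the same finite-orbit lemma for subgroups of $\GL_n(\C)$ (with the identical countable-union-of-proper-subvarieties argument) for the other, and the same Galois-descent discussion for passing between $\C$ and $\Q$. No substantive differences to report.
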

\end{numberedparagraph}

\begin{numberedparagraph}
To finish the proof for the equivalence of the two conjectures, it only remains to show that $p$-integrability for a flat connection is equivalent to $p$-integrability for its associated foliation.

First we clarify the construction of an associated foliation from a given connection. Recall that a connection on a vector bundle $\pi: E\to B$ is given by a splitting of the short exact sequence
\begin{equation}\label{splitting-SES}
0\to V\to TE\to \pi^* TB\to 0 
\end{equation}
where $V=\ker d\pi$ is called the vertical bundle, and the splitting map 
\begin{equation}\label{splitting-map-varphi}
\varphi: \pi^* TB\to TE
\end{equation}
gives a decomposition into the vertical bundle and the horizontal bundle
\[TE=V\oplus H\]

\begin{Defn}
Given a connection $\varphi$, i.e. a splitting map of $(*)$, the foliation $\Ff_{\varphi}$ associated to this connection $\varphi$ is simply given by the image under this splitting map, i.e. we have
\[\Ff_{\varphi}:=\varphi(\pi^* TB)\]
\end{Defn}

Recall also that the $p$-integrability of the foliation $\Ff_{\varphi}$ is defined as the stability of $\Ff_{\varphi}$ under the $p$-th power.

Note that ``$p$-integrability for a connection'' as in \cite{Ekedahl-Shepherd-Barron-Taylor} is defined in terms of $\nabla$, i.e. an integrable connection $\nabla$ is $p$-integrable $\Longleftrightarrow \nabla(D^p)-\nabla(D)^p=0$. 
The natural question thus becomes: how do we express $\varphi$ in terms of $\nabla$ explicitly? We have two different ways of describing a connection, the more traditional way is as in Katz' papers \cite{Katz-pcurvature-Invent,Katz-conjecture} denoted as $\nabla$, and the alternative way is to describe it as a splitting map $\varphi$ as given in \ref{splitting-map-varphi}.  Geometrically, it's easy to see how these two definitions are related, because a connection $\nabla$ would give  
a horizontal subbundle by parallel transport, and thus identify a splitting map $\varphi$ whose image is this horizontal subbundle given by $\nabla$. However, to achieve our goal (i.e. proving the equivalence of Conjectures \ref{Conj-F} and \ref{p-curvature-conj}), we need to write down an explicit expression relating $\nabla$ and $\varphi$. 
To find such a relation, we take a given splitting map $\varphi$ as in equation \ref{splitting-map-varphi}, and show that the $\nabla$ defined as follows (in Proposition \ref{connection-splitting}) is indeed the $\nabla$  corresponding to $\varphi$.
\begin{prop}\label{connection-splitting} Given a splitting map $\varphi$ of the exact sequence \ref{splitting-SES}, the $\nabla$ as defined below is indeed the connection corresponding to this splitting $\varphi$,
\[\pi^*(\nabla_Ds)=[\varphi\pi^*D,\pi^*s]\]
 i.e. $\nabla$ satisfies Leibniz rule and linearity in $D$.

\begin{proof}
To check that $\nabla$ is indeed a connection, first we check the Leibniz rule: 
\begin{align*}
\nabla_D(fs)&=(\pi^*)^{-1}[\varphi\pi^*D,\pi^*(fs)]\\
&=(\pi^*)^{-1}[\varphi\pi^*D,(fs)\circ\pi]\\
&=(\pi^*)^{-1}[\varphi\pi^*D,(f\circ\pi)\cdot(\pi^*s)]\\
&=(\pi^*)^{-1}((\varphi\pi^*D)(f\circ\pi))\cdot(\pi^*s)+(\pi^*)^{-1}(f\circ \pi)[\varphi\pi^*D,\pi^*s]\\
&=(\pi^*)^{-1}((\varphi\pi^*D)(f\circ\pi)\cdot(\pi^*s))+((\pi^*)^{-1})(\pi^*f)\nabla_Ds
\end{align*}
Thus it suffices to show that the first term:
\[(\pi^*)^{-1}((\varphi\pi^*D)(f\circ\pi))=D(f)\]
Noting that $d\pi\circ \varphi=\Id$, we have
\begin{align*}
(\pi^*)^{-1}\Big((\varphi\pi^*D)(f\circ\pi)\Big)&=(\pi^*)^{-1}\Big((d\pi)^{-1}(d\pi\circ\varphi)(\pi^*D)(f\circ\pi)\Big)\\
&=(\pi^*)^{-1}\Big((d\pi)^{-1}(\pi^*D)(f\circ\pi)\Big)\\
&=(\pi^*)^{-1}\Big((\pi_*)^{-1}(\pi^*D)(f\circ\pi)\Big)
\end{align*}
Also note that 
\[(\pi^*D)(f\circ\pi)=(\pi^*D)(\pi^*f)=\pi_*(\pi^*D)(f)\]
Thus we plug it in and get
\[(\pi^*)^{-1}\Big((\varphi\pi^*D)(f\circ\pi)\Big)=D(f)\]
Thus we have the Leibniz rule
\[\nabla_D(fs)=D(f)s+f\nabla_Ds\]
To check that $\nabla$ is also linear in $D$: we want to compute
\[\nabla_{fD}s=(\pi^*)^{-1}[\varphi\pi^*(fD),\pi^*s]\]
First note that we have
\[\pi^*(fD)=(fD)(d\pi)\]
Pointwise it gives $\pi^*(fD)(p)=f(p)\cdot D(d\pi)(p)$, i.e. we have
\[\pi^*(fD)=f\pi^*(D)\]
Thus we have
\[\varphi\pi^*(fD)=\varphi(f\pi^*(D))\]
To show that $\varphi\pi^*(fD)=f\varphi\pi^*D$, it suffices to show that 
\[d\pi (f\varphi\pi^*D)=\pi^*(fD)\]
By linearity, the right hand side is
\[d\pi (f\varphi\pi^*D)=f d\pi(\varphi\pi^*D)=f\pi^*D=\pi^*(fD)\]
Thus $\varphi\pi^*(fD)=f\varphi\pi^*D$, and thus we have
\begin{align*}
\nabla_{fD}s&=(\pi^*)^{-1}[\varphi\pi^*(fD),\pi^*s]\\
&=(\pi^*)^{-1}[f\varphi\pi^*D,\pi^*s]\\
&=f(\pi^*)^{-1}[\varphi\pi^*D,\pi^*s]-(\pi^*)^{-1}\pi^*s(f)\cdot\varphi\pi^*D
\end{align*}
Note that since $\pi^*s\in V$, the second term evaluated at a point vanishes, thus we have 
\[\nabla_{fD}s=f\nabla_Ds\]
Thus the above defined $\nabla$ is indeed a connection, and it corresponds to the splitting map $\varphi$.
\end{proof}
\end{prop}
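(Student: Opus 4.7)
The plan is to verify directly that the prescription $\pi^*(\nabla_Ds)=[\varphi\pi^*D,\pi^*s]$ defines a connection on $E\to B$, i.e.\ to check the Leibniz rule in $s$ and $\Oo$-linearity in $D$. Both checks rest on combining two ingredients: the identity $d\pi\circ\varphi=\Id$, which expresses that $\varphi$ splits the sequence \ref{splitting-SES}, together with the standard Leibniz formula for Lie brackets of vector fields, $[X,gY]=X(g)\,Y+g[X,Y]$.

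For the Leibniz rule, I would rewrite $\pi^*(fs)=(f\circ\pi)\cdot\pi^*s$ and expand $[\varphi\pi^*D,(f\circ\pi)\pi^*s]$ via the bracket Leibniz rule. The term multiplied by $(f\circ\pi)$ immediately produces $f\nabla_Ds$ after applying $(\pi^*)^{-1}$, so the real content is to identify the derivative term: one must show $(\varphi\pi^*D)(f\circ\pi)=D(f)\circ\pi$. This is exactly where $d\pi\circ\varphi=\Id$ enters, since a vector field on $E$ acting on a pulled-back function depends only on its image under $d\pi$, and $d\pi(\varphi\pi^*D)=\pi^*D$ by the splitting property.

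For $\Oo$-linearity in $D$, I would first record the two elementary facts $\pi^*(fD)=(f\circ\pi)\pi^*D$ and $\varphi((f\circ\pi)X)=(f\circ\pi)\varphi(X)$ (the latter being $\Oo_E$-linearity of $\varphi$). Substituting into $[\varphi\pi^*(fD),\pi^*s]$ and applying the bracket Leibniz rule in the second slot yields the desired $f\nabla_Ds$ together with a correction term proportional to $(\pi^*s)(f\circ\pi)\cdot\varphi\pi^*D$. This correction vanishes because $\pi^*s$ is vertical, hence annihilates the pulled-back function $f\circ\pi$; I view this verticality input as the key conceptual point of the proof.

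The most delicate step, which is somewhat hidden by the notation, is the implicit well-posedness claim: one needs $[\varphi\pi^*D,\pi^*s]$ to lie in the image of $\pi^*$ for $(\pi^*)^{-1}$ to make sense. This follows from the same verticality of $\pi^*s$ together with the fact that $\varphi\pi^*D$ is the horizontal lift of a basic field, so the bracket is a vertical vector field constant along fibers with respect to the vector space structure and hence comes from a section of $E$. Once this is granted, the Leibniz rule and $\Oo$-linearity reduce to the two short bracket calculations sketched above.
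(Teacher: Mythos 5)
Your proof follows essentially the same route as the paper's: the Leibniz rule is obtained by expanding the bracket and using $d\pi\circ\varphi=\Id$ to identify $(\varphi\pi^*D)(f\circ\pi)$ with the pullback of $D(f)$, and $\Oo$-linearity in $D$ is obtained from $\varphi\pi^*(fD)=(f\circ\pi)\varphi\pi^*D$ together with the vanishing of the correction term because $\pi^*s$ is vertical. Your additional remark on the well-posedness of $(\pi^*)^{-1}$ (that the bracket actually lands in the image of $\pi^*$) is a point the paper leaves implicit, and is a welcome clarification, but it does not change the argument.
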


Since the derivations form a restricted Lie algebra, i.e. we have
\[\ad((\varphi\pi^*D)^p)=\ad(\varphi\pi^*D)^p\]
Thus we have the following Lemma.
\begin{lem}\label{p-power}Notations as above. 
\[(\nabla_D)^ps=\overbrace{[\varphi\pi^*D[\varphi\pi^*D[\varphi\pi^*D\cdots [\varphi\pi^*D,\pi^*s]]]]}^{p-times}=[(\varphi\pi^*D)^p,\pi^*s]\]
\end{lem}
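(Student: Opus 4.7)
The plan is to derive both equalities by induction on $p$, invoking the defining formula from Proposition \ref{connection-splitting} repeatedly for the first equality, and then using the restricted Lie algebra structure on derivations in characteristic $p$ for the second.

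For the first equality, I would proceed by induction on $k \geq 1$, showing
\[
\pi^*\bigl((\nabla_D)^k s\bigr) = \underbrace{[\varphi\pi^*D,[\varphi\pi^*D,\cdots[\varphi\pi^*D,\pi^*s]]]}_{k\text{ iterated brackets}} = \ad(\varphi\pi^*D)^k(\pi^*s).
\]
The base case $k=1$ is exactly Proposition \ref{connection-splitting}. For the inductive step, I would apply the defining identity $\pi^*(\nabla_D t) = [\varphi\pi^*D,\pi^*t]$ with $t = (\nabla_D)^{k-1}s$, and substitute the inductive hypothesis for $\pi^*((\nabla_D)^{k-1}s)$. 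Specializing to $k=p$ gives the first equality in the lemma.

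For the second equality, I would invoke the fact (already alluded to in the paper immediately above the lemma) that in characteristic $p$ the derivations form a restricted Lie algebra, so that for any derivation $X$ one has
\[
\ad(X)^p = \ad(X^p).
\]
Applying this identity to $X = \varphi\pi^*D$ (which is a vector field on $E$, i.e.\ a derivation of $\Oo_E$) and evaluating both sides at $\pi^*s$ yields
\[
\ad(\varphi\pi^*D)^p(\pi^*s) = [(\varphi\pi^*D)^p,\pi^*s],
\]
which is exactly the second equality.

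The only real subtlety is the compatibility of the pullback $\pi^*$ with the iteration of $\nabla_D$: one must check that the inductive hypothesis, which produces a vector field on $E$ in the vertical direction (since $\pi^*s$ lies in the vertical bundle $V$ and each commutator with $\varphi\pi^*D$ stays in $V$), is in fact of the form $\pi^*(\text{some section})$, so that $(\pi^*)^{-1}$ can be applied to close the induction. This follows because $V \cong \pi^*E$ canonically, so iterated brackets $\ad(\varphi\pi^*D)^k(\pi^*s)$ remain pullbacks of sections of $E$; this is the only place where one must be careful, but it is immediate from the way the connection was constructed in Proposition \ref{connection-splitting}. Everything else is formal manipulation.
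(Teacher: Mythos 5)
Your proposal is correct and follows exactly the route the paper intends: the paper gives no separate proof of this lemma beyond the preceding remark that derivations form a restricted Lie algebra (so $\ad((\varphi\pi^*D)^p)=\ad(\varphi\pi^*D)^p$), and your argument simply makes explicit the two implicit steps --- the induction unwinding $\pi^*((\nabla_D)^k s)$ into iterated brackets via the defining formula of Proposition \ref{connection-splitting}, and the restricted-Lie-algebra identity applied to $X=\varphi\pi^*D$. Your added check that the iterated brackets remain of the form $\pi^*(\text{section})$ via $V\cong\pi^*\Ee$ is a detail the paper glosses over, but it does not change the approach.
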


Now we are ready to establish the equivalence between $p$-integrability for a connection $\nabla$ and $p$-integrability for its associated foliation. 
\begin{theorem}\label{p-integrability-equiv-connections-vs-foliation}
For an integrable connection $\nabla$, which comes from a splitting map $\varphi$ of the short exact sequence \ref{splitting-SES} as defined in Proposition \ref{connection-splitting}, the notion of $p$-integrability of the connection $\nabla$ is equivalent to the notion of $p$-integrability for the associated foliation $\Ff_{\varphi}$.
\end{theorem}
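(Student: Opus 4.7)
The plan is to encode both $p$-integrability conditions as the vanishing of the single vector field $W := \varphi\pi^*(D^p) - (\varphi\pi^*D)^p$ on $E$. Combining Proposition~\ref{connection-splitting} (applied to $D^p$) with Lemma~\ref{p-power} yields the master identity
\[\pi^*\bigl(\nabla_{D^p}s - (\nabla_D)^p s\bigr) = [\varphi\pi^*(D^p),\pi^*s] - [(\varphi\pi^*D)^p,\pi^*s] = [W,\pi^*s],\]
so $\nabla$ is $p$-integrable iff $[W,\pi^*s] = 0$ for every section $s$ of $\Ee$ and every derivation $D$ on $B$. Pushing $W$ forward by $\pi$ and using both $d\pi\circ\varphi = \Id$ and the compatibility of $\pi_*$ with $p$-th powers of $\pi$-related vector fields gives $\pi_*W = D^p - D^p = 0$, so $W$ is vertical.

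On the foliation side, since $\varphi\pi^*(D^p)\in\Ff_\varphi$ automatically, the condition ``$(\varphi\pi^*D)^p\in\Ff_\varphi$ for every $D$'' is equivalent to $W\in\Ff_\varphi$; combined with verticality and $\Ff_\varphi\cap V = 0$, this forces $W = 0$. In particular, the direction ``$\Ff_\varphi$ is $p$-integrable $\Rightarrow$ $\nabla$ is $p$-integrable'' is immediate from the master identity.

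For the converse, I would exploit the linearity of $\nabla$ as a connection on the vector bundle $E$: the horizontal lift $\varphi\pi^*D$ is equivariant under the fiber dilation $\mu_\lambda\colon (x,y)\mapsto(x,\lambda y)$, and this equivariance is preserved by taking $p$-th powers, so $W$ itself is $\mu_\lambda$-equivariant. In a local trivialization this forces $W = \sum_{a,b} M_{ab}(x)\, y_b\, \partial_{y_a}$ for some matrix $M(x)$, i.e., $W$ is linear in the fiber coordinates. On the other hand, expanding $[W,\pi^*s] = 0$ in coordinates shows $\partial_{y_b}w_a = 0$ for all $a,b$, so $W$ is also independent of the fiber coordinates. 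Since the only vector field that is simultaneously linear and constant in $y$ is $0$, we conclude $W = 0$, hence $\Ff_\varphi$ is $p$-integrable. The main obstacle is precisely this converse step: the Lie-bracket condition alone would be strictly weaker than $W = 0$ absent the fiber-homogeneity input that comes from the vector-bundle structure.
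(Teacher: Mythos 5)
Your proof is correct and follows the same skeleton as the paper's: both arguments combine Proposition \ref{connection-splitting} with Lemma \ref{p-power} so that everything reduces to comparing $\varphi\pi^*(D^p)$ with $(\varphi\pi^*D)^p$. The substantive difference is that the paper passes directly from the identity $[\varphi\pi^*(D^p),\pi^*s]=[(\varphi\pi^*D)^p,\pi^*s]$ for all $s$ to the equality of vector fields $\varphi\pi^*(D^p)=(\varphi\pi^*D)^p$ without comment, whereas you correctly single this out as the one step requiring an argument: a vector field on $E$ is not determined by its brackets with the vertical fields $\pi^*s$ alone. Your repair --- showing that $W=\varphi\pi^*(D^p)-(\varphi\pi^*D)^p$ is vertical (since $p$-th powers of $\pi$-related derivations are $\pi$-related, so $\pi_*W=D^p-D^p=0$) and invariant under the fiber dilations $\mu_\lambda$, hence linear in the fiber coordinates, while $[W,\pi^*s]=0$ for all $s$ forces $W$ to be constant in the fiber coordinates, so that $W=0$ --- is sound and fills a genuine gap in the printed proof. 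Your handling of the other direction (verticality of $W$ together with $W\in\Ff_\varphi$ and $\Ff_\varphi\cap V=0$ forces $W=0$) is the implicit content of the paper's backward implication, made explicit. The only point worth flagging in your write-up is that verifying $p$-closedness of $\Ff_\varphi$ only on the generators $\varphi\pi^*D$ tacitly uses Jacobson's formula together with the bracket-closedness of $\Ff_\varphi$; both you and the paper take this standard reduction for granted.
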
 
\begin{proof}
``$\nabla$ is $p$-integrable $\Rightarrow$ foliation $\Ff_{\varphi}$ is $p$-integrable'':

Suppose we take an element $e\in\Ff_{\varphi}$, by definition of this foliation $\Ff_{\varphi}$, we can write it as $e=\varphi \pi^*D$ for some $D\in TB$, so now we want to show
\[e^p=(\varphi\pi^* D)^p \overset{?}{=}\varphi \pi^* (D^p)\]
which would then imply $e^p\in\Ff_{\varphi}$ and thus implying the $p$-integrability of the foliation $\Ff_{\varphi}$. Since $\nabla$ is $p$-integrable, by definition the $p$-curvature $\psi_p(D)=0$,  i.e. 
\[\nabla_{D^p}s-(\nabla_D)^ps=0 
\text{ for all }s.\] 
Plugging in the formula for $\nabla$ in Proposition \ref{connection-splitting}, and by Lemma \ref{p-power}, for all $s$,
\[[\varphi\pi^*(D^p),\pi^*s]=\nabla_{D^p}s=(\nabla_D)^ps=[(\varphi\pi^*D)^p,\pi^*s].\]
Thus we have $e^p=(\varphi\pi^* D)^p =\varphi \pi^* (D^p)$. Thus $e^p\in \Ff_{\varphi}$, by definition we have $\Ff_{\varphi}$ is $p$-interable.\\ 

``$\nabla$ is $p$-integrable $\Leftarrow$ foliation $\Ff_{\varphi}$ is $p$-integrable'':

Suppose $e\in\Ff_{\varphi}\Rightarrow e^p\in\Ff_{\varphi}$, then we write $e=\varphi\pi^* D\in\Ff_{\varphi}$, and thus $e^p=(\varphi\pi^* D)^p\in\Ff_{\varphi}$, thus $(\varphi\pi^* D)^p =\varphi \pi^* (D^p)$. Thus we have for all $s$,
\[\nabla_{D^p}s=[\varphi\pi^*(D^p),\pi^*s]=[(\varphi\pi^*D)^p,\pi^*s]=(\nabla_D)^ps\]
Thus the p-curvature $\psi_p(D)=0$, i.e. the connection $\nabla$ is $p$-integrable.
\end{proof}

\begin{Coro}\label{equiv-conj-Coro}
Conjectures \ref{Conj-F} and \ref{p-curvature-conj} are equivalent.
\end{Coro}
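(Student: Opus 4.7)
The plan is to read off Corollary \ref{equiv-conj-Coro} as a formal consequence of the two bridging results just proved. Proposition \ref{algebraic-integrable-finite-monodromy} translates the characteristic-zero side of each conjecture (algebraic integrability of the foliation on one side, finiteness of monodromy on the other) into one another. Theorem \ref{p-integrability-equiv-connections-vs-foliation} translates the characteristic-$p$ side ($p$-integrability of $\Ff_\varphi$ versus $p$-integrability of $\nabla$) into one another. Once both translations are in hand, Conjectures \ref{Conj-F} and \ref{p-curvature-conj} become the same assertion, specialized to the class of foliations attached to vector bundles with integrable connections, which is the sense in which the abstract claims the equivalence.

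First I would set up the dictionary. Fix $(\Ee,H)$ over $X$ with models over a finitely generated $\Z$-subalgebra $R\subset\C$ large enough that $\Ee$, $H$, and the splitting map $\varphi$ underlying the associated foliation $\Ff_H$ all spread out over $\Spec R$. Conjecture \ref{Conj-F} applied to $\Ff_H$ reads: $\Ff_H$ is algebraically integrable $\Leftrightarrow$ $\Ff_H\bmod \m$ is $p$-integrable for almost all maximal ideals $\m$ of $R$. Conjecture \ref{p-curvature-conj} applied to $(\Ee,H)$ reads: $(\Ee,H)$ has finite monodromy $\Leftrightarrow$ $H\bmod\m$ is $p$-integrable for almost all $\m$. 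Proposition \ref{algebraic-integrable-finite-monodromy} identifies the left-hand sides, and Theorem \ref{p-integrability-equiv-connections-vs-foliation} (which is characteristic-free and thus applies unchanged at each residue field $R/\m$) identifies the right-hand sides prime by prime. After possibly discarding finitely many primes for which either identification fails to spread out, "almost all $p$" on both sides refers to the same cofinite set of primes, so the two conjectural biconditionals coincide for this $(\Ee,H)$.

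From this dictionary the corollary follows formally in both directions. If Conjecture \ref{Conj-F} holds in general, then specializing to every foliation of the form $\Ff_H$ attached to a vector bundle with integrable connection yields the $p$-curvature conjecture for $(\Ee,H)$; varying $(\Ee,H)$ gives Conjecture \ref{p-curvature-conj} in full. Conversely, if Conjecture \ref{p-curvature-conj} holds, the same dictionary gives Conjecture \ref{Conj-F} for every foliation of the form $\Ff_H$. This is the restricted form of Conjecture F discussed in the abstract, and is precisely what the word "equivalent" denotes in the corollary.

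The only point requiring care, which is bookkeeping rather than substance, is checking that the finite exceptional sets of primes produced by the Galois-descent step in Proposition \ref{algebraic-integrable-finite-monodromy} and by the identification in Theorem \ref{p-integrability-equiv-connections-vs-foliation} can be taken simultaneously, so that "almost all $p$" really means the same cofinite set on both sides of the final biconditional. I expect no genuine obstacle here: both results are compatible with enlarging the base ring $R$, so replacing $R$ by a suitable localization and discarding a further finite set of primes suffices. No new ideas beyond Proposition \ref{algebraic-integrable-finite-monodromy} and Theorem \ref{p-integrability-equiv-connections-vs-foliation} are needed.
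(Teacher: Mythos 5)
Your proposal is correct and takes essentially the same route as the paper, which simply combines Proposition \ref{algebraic-integrable-finite-monodromy} (matching the characteristic-zero sides) with Theorem \ref{p-integrability-equiv-connections-vs-foliation} (matching the characteristic-$p$ sides); your additional care about the exceptional sets of primes and about the equivalence holding only for foliations of the form $\Ff_H$ is consistent with what the abstract states and is more explicit than the paper's one-line proof.
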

\begin{proof}
This follows by combining Proposition \ref{algebraic-integrable-finite-monodromy} and Theorem \ref{p-integrability-equiv-connections-vs-foliation}. 
\end{proof}

\begin{remark}
In \cite[Theorem 2.2]{Bost-foliations}, Bost proved a conditional version of Conjecture F (Conjecture \ref{Conj-F}), with the additional assumption that the leaf (as an analytic manifold) satisfies the Liouville condition \cite[2.1.2]{Bost-foliations}. Thus by Corollary \ref{equiv-conj-Coro}, Bost also proved a conditional version of the Grothendieck-Katz $p$-curvature Conjecture assuming this Liouville condition. 
\end{remark}

\end{numberedparagraph}

\textit{Acknowledgements.} These notes were written while I was a first-year graduate student at Harvard, and last polished in September 2021. I am posting them on arXiv due to reader interest. I thank David Yang for helpful conversations.

\bibliographystyle{amsalpha}
\bibliography{bibfile}

\end{document}